\newtheorem{thm}{Theorem}[section]
\newtheorem{prop}[thm]{Proposition}
\newtheorem{lem}[thm]{Lemma}
\theoremstyle{definition}
\theoremstyle{remark}
\numberwithin{equation}{section}
\newcommand{\Ree}{\mathrm{Re}}
\newcommand{\Ima}{\mathrm{Im}}
\newcommand{\N}{\mathrm{N}}
\newcommand{\red}{\mathrm{red}}
\newcommand{\Spec}{\mathrm{Spec}}
\newcommand{\isoto}{\myxrightarrow{\,\sim\,}}
\def\myrightarrow{{\setbox\z@\hbox{$\rightarrow$}\dimen0\ht\z@\multiply\dimen0 6\divide\dimen0 10\ht\z@\dimen0\box\z@}}
\def\myrightarrowfill@{\arrowfill@\relbar\relbar\myrightarrow}
\newcommand{\myxrightarrow}[2][]{\ext@arrow 0359\myrightarrowfill@{#1}{#2}}
\def\loccit{\emph{loc}.\kern3pt \emph{cit}.{}\xspace}
\def\eg{e.g.\kern.3em}
\def\resp {\text{resp.}\kern.3em}
\def\C{\mathbb C}
\def\R{\mathbb R}
\def\N{\mathbb N}
\def\cO{\mathcal{O}}
\def\cF{\mathcal{F}}
\def\cS{\mathcal{S}}
\def\km{\mathfrak{m}}
\begin{document}

\title[]{Stein spaces and Stein algebras}

\author{Olivier Benoist}
\address{D\'epartement de math\'ematiques et applications, \'Ecole normale sup\'erieure, CNRS,
45 rue d'Ulm, 75230 Paris Cedex 05, France}
\email{olivier.benoist@ens.fr}

\renewcommand{\abstractname}{Abstract}
\begin{abstract}
We prove that the category of Stein spaces and holomorphic maps is anti-equivalent to the category of Stein algebras and $\C$-algebra morphisms. This removes a finite dimensionality hypothesis from a theorem of Forster.
\end{abstract}
\maketitle

\section*{Introduction}

Complex spaces are a generalization of complex manifolds allowing singularities, and as such are the basic objects of study in complex-analytic geometry. 
Formally, they are defined to be $\C$-ringed spaces that are locally isomorphic to model spaces defined by the vanishing of finitely many holomorphic functions in a domain of~$\C^N$ for some $N\geq 0$ (see \cite[1, \S 1.5]{GRCoherent}). 
We assume that they are second\nobreakdash-coun\-ta\-ble, but not necessarily reduced or finite-dimensional.

A complex space~$S$ is said to be \textit{Stein} if $H^k(S,\cF)=0$ for all coherent sheaves $\cF$ on~$S$ and all $k>0$ (see \cite{GRStein}). Stein spaces are the complex-analytic analogues of affine algebraic varieties.  For instance, the Stein spaces of finite embedding dimension are exactly those complex spaces that may be realized as closed complex subspaces of $\C^N$ for some $N\geq 0$ (see \cite[Theorem 6]{Narasimhan}). 

If $S$ is a complex space, the $\C$-algebra $\cO(S)$ of holomorphic functions on $S$ carries a canonical Fr\'echet topology (see \cite[V, \S 6]{GRStein}).  A topological $\C$-algebra of the form $\cO(S)$ for some Stein space $S$ is called a \textit{Stein algebra}. 

In algebraic geometry, the anti-equivalence of categories between affine varieties over~$\C$ and $\C$-algebras of finite type is a basic tool to study affine algebraic varieties. Our main theorem is a counterpart of this result in complex-analytic geometry.

\begin{thm}[Theorem \ref{th}]
\label{thmain}
The contravariant functor 
\begin{equation}
\label{antieq}
 \left\{  \begin{array}{l}
    \textrm{\hspace{2.2em}Stein spaces}\\
\textrm{and holomorphic maps}
  \end{array}\right\}\to
 \left\{  \begin{array}{l}
    \textrm{\hspace{2.5em}Stein algebras}\\
\textrm{and $\C$-algebra morphisms}
  \end{array}\right\}
\end{equation}
given by $S\mapsto \cO(S)$  is an anti-equivalence of categories.
\end{thm}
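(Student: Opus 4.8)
The plan is to prove that the functor $S\mapsto\cO(S)$ is fully faithful; essential surjectivity holds by definition, since a Stein algebra is by fiat of the form $\cO(S)$ and \emph{arbitrary} $\C$-algebra morphisms are allowed on the target side. So fix Stein spaces $S$ and $T$ and consider
\[
\mathrm{Hom}(S,T)\longrightarrow\mathrm{Hom}_{\C\textrm{-alg}}\big(\cO(T),\cO(S)\big),\qquad f\longmapsto f^{*}=(h\mapsto h\circ f).
\]
\emph{Injectivity} is the easy part: global holomorphic functions on a Stein space separate its points, and near any point of $T$ finitely many global functions $h_1,\dots,h_N\in\cO(T)$ realise an open neighbourhood as a closed complex subspace of an open subset of $\C^N$; hence if $f^{*}=g^{*}$ then $f$ and $g$ agree set-theoretically and, locally on $T$, both factor through the same closed immersion $(h_1,\dots,h_N)$, so $f=g$ as morphisms of $\C$-ringed spaces.

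For \emph{surjectivity}, let $\varphi\colon\cO(T)\to\cO(S)$ be a $\C$-algebra morphism. For each $s\in S$, the composite $\chi_s:=\mathrm{ev}_s\circ\varphi\colon\cO(T)\to\C$ is a $\C$-algebra character. The key claim, proved below, is that \emph{every $\C$-algebra character of a Stein algebra is the evaluation at a unique point}. Granting it, define $f\colon S\to T$ by letting $f(s)$ be the point with $\chi_s=\mathrm{ev}_{f(s)}$; then $\varphi(h)=h\circ f$ for every $h\in\cO(T)$ (as a function on $S$). This $f$ is continuous: the ``polynomial'' neighbourhoods $\{t\in T:|h_i(t)-c_i|<\varepsilon,\ i\le k\}$ with $h_i\in\cO(T)$ form a basis of the topology of $T$ (again because global functions locally embed $T$), and their $f$-preimages $\{s:|\varphi(h_i)(s)-c_i|<\varepsilon\}$ are open. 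Moreover $f$ is holomorphic: near $s_0$, choosing a closed immersion $\iota=(h_1,\dots,h_N)\colon V\hookrightarrow\Omega\subseteq\C^N$ of an open $V\ni f(s_0)$, one has $\iota\circ f=(\varphi(h_1),\dots,\varphi(h_N))$ on $f^{-1}(V)$, which is holomorphic, so $f$ is holomorphic there (with a routine extra check via Cartan's theorems when $T$ is non-reduced). Finally $f^{*}h=h\circ f=\varphi(h)$ on global sections, and since coherent sheaves on a Stein space — in particular $\cO_T$ and its symmetric powers — are generated by their global sections, this forces $f^{\#}=\varphi$ on structure sheaves.

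It remains to prove that characters of Stein algebras are evaluations. If $T$ has finite embedding dimension, then (Narasimhan) $T$ is a closed complex subspace of some $\C^N$, $\cO(T)=\cO(\C^N)/I$, and a character lifts to a character $\chi$ of $\cO(\C^N)$; writing $a=(\chi(z_1),\dots,\chi(z_N))$, the identity $g-g(a)\in(z_1-a_1,\dots,z_N-a_N)\cdot\cO(\C^N)$ for entire $g$ (holomorphic Taylor remainder) gives $\chi=\mathrm{ev}_a$, and vanishing on $I$ forces $a\in V(I)=|T|$; this is, in essence, Forster's theorem. In general, exhaust $T=\bigcup_n V_n$ by relatively compact, finite-dimensional Stein open subsets with $V_n\Subset V_{n+1}$ (sublevel sets of a strictly plurisubharmonic exhaustion), so $\cO(T)=\varprojlim_n\cO(V_n)$ as topological algebras. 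The plan is to show that every character $\chi$ of $\cO(T)$ is continuous; then $|\chi|\le C\|\cdot\|_{L}$ for some compact $L\subseteq V_n$, so $\chi$ extends to a character of the uniform-algebra completion of $\cO(V_n)|_{L}$, which by the theory of uniform algebras together with the finite-dimensional case is the evaluation at a point of the holomorphically convex (hence compact) hull of $L$ inside $V_n\subseteq T$.

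The main obstacle is precisely this continuity statement: for a general commutative Fréchet algebra it is Michael's problem, and it is exactly the step where Forster needed finite-dimensionality. I expect to establish it by a Mittag--Leffler / telescoping argument: if $\chi$ were continuous at no finite level, one would produce $f_n\in\cO(T)$ with $\chi(f_n)=1$ and $\|f_n\|_{\overline{V_{n-1}}}$ arbitrarily small, so that $\sum_n\lambda_n f_n$ converges in $\cO(T)$ for every scalar sequence $(\lambda_n)$; one then plays these series against multiplicativity of $\chi$ together with the finite-dimensionality of each $V_n$ — which forces the finitely many global functions used to embed $V_n$ into some $\C^{m_n}$ to obey the analytic relations cutting out $V_n$, relations that $\chi$ must therefore respect — to pin $\chi$ to a genuine point of $T$ and contradict discontinuity. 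The remaining loose ends (the non-reduced case, where one argues at the level of points since characters kill nilpotents, and the promotion of the set-map $f$ to a morphism of $\C$-ringed spaces) are handled by Cartan's Theorems A and B on Stein spaces.
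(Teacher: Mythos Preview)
Your reduction to the statement ``every $\C$-algebra character on a Stein algebra is a point evaluation'' is exactly right, and your treatment of the finite-embedding-dimension case and of the passage from characters to a holomorphic map is fine. The gap is precisely where you locate it yourself: the continuity of an arbitrary character $\chi\colon\cO(T)\to\C$ in the infinite-dimensional case. Your Mittag--Leffler sketch does not close this gap. Having produced $f_n\in\cO(T)$ with $\chi(f_n)=1$ and $\|f_n\|_{\overline{V_{n-1}}}$ tiny, you cannot evaluate $\chi\big(\sum_n\lambda_n f_n\big)$ term by term without already knowing $\chi$ is continuous, so no contradiction emerges; and the sentence ``one then plays these series against multiplicativity \dots\ to pin $\chi$ to a genuine point'' carries no actual mechanism. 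This is the Michael problem restricted to Stein algebras, and it has resisted soft functional-analytic arguments of this type for decades; Forster, Markoe and Ephraim all needed some finite-dimensionality hypothesis for a reason.

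The paper closes this gap by a completely different, geometric route. It first proves (using Oka theory: the Forstneri\v{c}--Wold Oka manifolds $Y_r\subset\C^2$ and Forstneri\v{c}'s jet interpolation theorem) that every Stein space $S$ admits a holomorphic map $f\colon S\to\C^2$ \emph{all of whose fibers are finite-dimensional}. Given a character $\chi\colon\cO(S)\to\C$, one sets $\lambda_i:=\chi(f_i)$ and lets $T=\{f_1=\lambda_1,\,f_2=\lambda_2\}$; since $f_i-\lambda_i\in\ker\chi$, an elementary factorization lemma (Ephraim) shows $\chi$ factors through the restriction $\cO(S)\to\cO(T)$. Now $T$ is a finite-dimensional Stein space, so Forster's original theorem applies to the induced character on $\cO(T)$, yielding continuity and a point. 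Automatic continuity of arbitrary morphisms then follows from Forster's criterion, and the anti-equivalence is Forster's Satz~1. In short: the paper does not prove continuity by analysis on $\cO(S)$; it manufactures a finite-dimensional closed subspace through which $\chi$ must factor, and that construction is the genuine new idea, not visible in your proposal.
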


Very significant particular cases of Theorem \ref{thmain} were previously known.  
First, Forster has shown in \cite[Satz 1]{Forster} that Theorem \ref{thmain} holds if one replaces  the right-hand side of (\ref{antieq}) by the category of Stein algebras and continuous $\C$-algebra morphisms. From this point of view,  our contribution is an automatic continuity result for morphisms of Stein algebras (see Theorem \ref{thcont} below).

 Second, Forster has proven this automatic continuity result in restriction to finite-dimensional Stein spaces (see \cite[Theorem 5]{ForsterUniqueness}). 
 In particular, Theorem~\ref{thmain} was already known in restriction to finite-dimensional Stein spaces and their associated Stein algebras.
Forster's theorem was later generalized by 
Markoe \cite{MarkoeStein} and 
Ephraim \cite[Theorem 2.3]{Ephraim} who made weaker finite dimensionality assumptions.  Our contribution is to remove these finite dimensionality hypotheses altogether. This problem was raised by Forster in \cite[Remark p.162]{ForsterUniqueness}.

Our strategy to prove Theorem \ref{thmain} is to reduce to the finite-dimensional case treated by Forster by means of the next theorem.

\begin{thm}[Theorem \ref{prop}]
\label{propmain}
Let $S$ be a Stein space. Then there exists a holomorphic map ${f:S\to\C^2}$ all of whose fibers are finite-dimensional.
\end{thm}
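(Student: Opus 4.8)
The plan is to reduce the statement to making $f$ grow along the filtration of $S$ by local dimension, and then to construct such an $f$ with the Oka--Cartan machinery on Stein spaces. The function $x\mapsto\dim_xS$ is upper semicontinuous and everywhere finite, so the sets $S_{\geq d}:=\{x\in S:\dim_xS\geq d\}$ form a decreasing sequence of closed analytic subsets of $S$ with $\bigcap_dS_{\geq d}=\varnothing$; moreover every compact subset of $S$, being covered by finitely many finite\nobreakdash-dimensional charts, is disjoint from $S_{\geq d}$ for $d$ large. Writing $\|f\|^2:=|f_1|^2+|f_2|^2$ for $f=(f_1,f_2)\colon S\to\C^2$, it suffices to arrange that $\inf_{S_{\geq d}}\|f\|\to\infty$: then for any $c\in\C^2$ the fibre $f^{-1}(c)$ lies in $\{\|f\|\leq\|c\|\}$, hence is disjoint from $S_{\geq d}$ for some $d$, hence is a closed analytic subset of the open subspace $\{x\in S:\dim_xS\leq d-1\}$ and so is finite\nobreakdash-dimensional.

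The reason the target must be $\C^2$, not $\C$, is a hyperbolicity obstruction: on a locus biholomorphic to some $\C^n$, a holomorphic function bounded below in modulus is constant (Liouville, applied to its reciprocal), and chaining such loci together inside a connected $S$ forces the values of these constants to agree, so a scalar $g$ cannot have $|g|\to\infty$ along the $S_{\geq d}$. By contrast $\C^2\setminus\{\|w\|\leq R\}$ carries nonconstant entire curves --- e.g. $z\mapsto R\,(e^{z},e^{-z})$, of norm $\geq\sqrt2\,R$ everywhere --- so with two components there is room to be uniformly large while staying nonconstant; on the high\nobreakdash-dimensional locus I would steer $f$ towards models of this shape $R\,(e^{\tau},e^{-\tau})$.

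For the construction, fix an exhaustion $U_1\Subset U_2\Subset\cdots$ of $S$ by relatively compact --- hence holomorphically convex --- Stein open subsets; $\overline{U_k}$ is compact, so $n_k:=\max_{\overline{U_k}}\dim_xS<\infty$, and we may assume $n_k\nearrow\infty$ (otherwise $S$ is finite\nobreakdash-dimensional and $f\equiv0$ works). Set $T_k:=S_{\geq n_k+1}$, a closed analytic subset of $S$ disjoint from $\overline{U_k}$. I would construct $f=\sum_kg^{(k)}$ with $g^{(k)}\in\cO(S)^2$ inductively: at step $k$, using Theorem~B (surjectivity of $\cO(S)\to\cO(T_k)$ and global generation of the ideal sheaf $\mathcal{I}_{T_k}$) together with Oka--Weil approximation on the holomorphically convex compact set $\overline{U_{k-1}}$, choose $g^{(k)}$ small enough on $\overline{U_{k-1}}$ that $\sum_kg^{(k)}$ converges uniformly on compacta to a holomorphic $f$, and chosen so that the partial sum has norm $\geq R_k$ on $T_k$ with $R_k\to\infty$, being there of the model shape $R_k\,(e^{\tau_k},e^{-\tau_k})$ for a suitable holomorphic $\tau_k$ (the freedom in $\tau_k$ serving to match the values already pinned down on the part of $T_k$ treated at earlier steps). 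If the terms added afterwards can be kept from spoiling $\|f\|\geq R_k$ on $T_k$, then $\inf_{S_{\geq d}}\|f\|\to\infty$, as wanted.

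The main obstacle is precisely that last clause. The $T_k$ are non\nobreakdash-compact and only partly contained in the compacta $\overline{U_k}$ on which approximation is controlled, so one must prevent the corrections $g^{(k')}$ with $k'>k$ from cancelling, on the part of $T_k$ lying outside every $\overline{U_j}$, the largeness arranged at step $k$; this is where the interplay of the two exhaustions $\{U_k\}$ and $\{S_{\geq d}\}$ must be organized with care, and where the $\C^2$\nobreakdash-valued model is indispensable, its zero locus being movable away from where the running partial sum is small and its values at the interfaces between consecutive strata being adjustable --- neither of which a scalar function allows. I expect these estimates to be the technical heart of the proof; the remaining arguments are standard applications of Cartan's Theorems~A and~B and of Oka--Weil approximation.
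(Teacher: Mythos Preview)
Your strategy---filter by local dimension and force $\|f\|$ to grow along $S_{\geq d}$---is natural, and you correctly locate both why one component cannot suffice and where the real difficulty lies. But the obstacle you flag in your last paragraph is not a technicality to be cleaned up: it is the heart of the problem, and Cartan's Theorems~A and~B together with Oka--Weil approximation do not resolve it. Those tools extend a holomorphic map $T_k\to\C^2$ to a map $S\to\C^2$ and control it on a prescribed compact set, but give no control over where the extension (or the later corrections $g^{(k')}$) lands on the noncompact part of $T_k$. Your model $R_k(e^{\tau_k},e^{-\tau_k})$ does not help: even granting that step $k$ can be arranged, each subsequent $g^{(k')}$ is only small on $\overline{U_{k'-1}}$, and nothing in your toolkit prevents it from being large on $T_k\setminus\overline{U_{k'-1}}$, pushing the partial sum wherever it likes. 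What is missing is a way to extend a holomorphic map into a proper open subset $\Omega\subsetneq\C^2$ while \emph{staying in} $\Omega$; classical Cartan theory offers no such mechanism.

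The paper supplies exactly this via modern Oka theory. Forstneri\v{c} and Wold exhibit an explicit nested family $(Y_r)_r$ of open subsets of $\C^2$ with empty intersection, each of which is an \emph{Oka manifold}; for Oka targets, Forstneri\v{c}'s interpolation theorem guarantees that a holomorphic map $S'\to Y_r$ from a closed complex subspace $S'$ of a reduced Stein space $S$ extends to a holomorphic map $S\to Y_r$---the extension stays in $Y_r$. This is precisely the extension-with-range-constraint that your scheme lacks. With it in hand the paper organizes the induction not along the dimension filtration but over the irreducible components $S_k$ of $S$ (and the irreducible components of their mutual intersections, well-ordered lexicographically), arranging that $f(S_k)\subset Y_k$ for every $k$. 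Since any point of $\C^2$ lies in only finitely many of the $Y_k$, each fibre meets only finitely many $S_k$ and is therefore finite-dimensional. No convergent series, no estimates on tails: the Oka extension theorem replaces the analysis you were unable to close.
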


Our proof of Theorem \ref{propmain} is an application of Oka theory. It uses in a crucial way new examples of Oka manifolds constructed by Forstneri\v{c} and Wold \cite{FW} (based on and extending earlier work of Kusakabe~\cite{Kusakabe1, Kusakabe}), as well as an extension theorem for holomorphic maps from Stein spaces to Oka manifolds due to Forstneri\v{c} \cite{Forstextension, Forstneric}. 

We note that Theorem \ref{propmain} is optimal in the sense that there may not exist a holomorphic map $f:S\to\C$ with finite-dimensional fibers (see Proposition~\ref{rem}). An earlier version of this article,  relying on the Oka manifolds constructed by Kusakabe \cite[Theorem 1.6]{Kusakabe}, only produced such a map with values in $\C^3$. We are grateful to Franc Forstneri\v{c} for drawing our attention to the article \cite{FW}, thereby allowing us to prove Theorem \ref{propmain} in the form stated above.

The results of Oka theory that we need are gathered in Section \ref{sec1}.
These tools are used to prove Theorem \ref{propmain} in Section \ref{sec2}.  In Section \ref{sec3}, we deduce Theorem~\ref{thmain} from Theorem~\ref{propmain} and from Forster's works \cite{ForsterUniqueness, Forster}.

\section{Tools from Oka theory}
\label{sec1}

We recall that a complex manifold $Y$ is said to be \textit{Oka} if for all convex compact subsets $K\subset\C^N$ and all open neighborhoods $\Omega$ of $K$ in $\C^N$, any holomorphic map~$\Omega\to Y$ can be approximated uniformly on $K$ by holomorphic maps $\C^N\to Y$ (see \cite[Definition 1.2]{ForstOka}).

We now introduce the Oka manifolds of interest to us. 
For $r\in\R$, define 
$$Y_r:=\{(z_1,z_2)\in\C^2\mid \Ima(z_2)<|z_1|^2+\Ree(z_2)^2-r\}.$$
The next proposition is a particular case of a theorem of Forstneri\v{c} and Wold \cite[Corollary 1.5]{FW} (pointed out in \cite[(1.2)]{FW}).

\begin{prop}
\label{propFW}
For $r\in\R$, the complex manifold $Y_r$ is Oka.
\end{prop}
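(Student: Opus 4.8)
The plan is to recognize $Y_r$ as a domain that is (up to a biholomorphic change of coordinates) of the precise shape treated in \cite{FW}, and then quote their result. First I would observe that $Y_r$ is the region in $\C^2$ lying strictly below the graph of the real-analytic plurisubharmonic exhaustion-type function $\rho(z_1,z_2) := |z_1|^2 + \Ree(z_2)^2 + r$ in the $\Ima(z_2)$-direction; that is, writing $z_2 = u + iv$, we have $Y_r = \{v < |z_1|^2 + u^2 + r\}$. The key point established in \cite[Corollary 1.5]{FW} (and spelled out for exactly this family in \cite[(1.2)]{FW}) is that domains of the form $\{(z_1,z_2) : \Ima(z_2) < h(z_1,\Ree(z_2))\}$ with $h$ a suitable strictly convex (or more generally strongly plurisubharmonic) function are Oka manifolds; the function $h(z_1,u) = |z_1|^2 + u^2 + r$ is visibly real-analytic and strictly plurisubharmonic in $z_1$ and strictly convex in $u$, so it falls within the scope of that theorem.

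The second step is purely bookkeeping: I would check that the normalization conventions of \cite{FW} match ours, i.e.\ that the role of the distinguished coordinate (the one whose imaginary part is bounded above) and the sign conventions agree, and if necessary apply an affine automorphism of $\C^2$ (swapping coordinates, or replacing $z_2$ by $-z_2$ or $iz_2$) to bring $Y_r$ into their normal form. Since the Oka property is invariant under biholomorphisms, this does not affect the conclusion. The parameter $r$ only translates the domain in the $v$-direction, which is again an affine automorphism of $\C^2$, so it suffices in fact to treat $r = 0$.

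I do not expect any genuine obstacle here, since the statement is explicitly flagged as a particular case of \cite[Corollary 1.5]{FW} via \cite[(1.2)]{FW}; the only thing to be careful about is matching hypotheses and conventions precisely so that the citation is legitimate. The one substantive ingredient being imported is the nontrivial fact from \cite{FW} — itself building on \cite{Kusakabe1, Kusakabe} — that such sublevel/subgraph domains satisfy the Oka property (e.g.\ via ellipticity, a spray construction, or the convex approximation property), and that is precisely what we are entitled to assume as an external input.
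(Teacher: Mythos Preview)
Your proposal is correct and matches the paper's approach exactly: the paper gives no argument at all for this proposition but simply records it as a direct citation of \cite[Corollary~1.5]{FW} via \cite[(1.2)]{FW}. Your additional discussion of why $Y_r$ fits the hypotheses (strict convexity of $h(z_1,u)=|z_1|^2+u^2+r$, reduction to $r=0$ by translation, invariance of the Oka property under biholomorphisms) is reasonable supporting commentary, but the paper itself treats the proposition as a black-box import and does not spell any of this out.
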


The following easy lemma implies in particular that $Y_r$ is contractible.

\begin{lem}
\label{lemhomo}
Fix $r\in\R$. There is a homotopy $(h_t)_{t\in [0,1]}:\C^2\to \C^2$ inducing strong deformation retractions of both $\C^2$ and $Y_r$ onto $\{(z_1,z_2)\in\C^2\mid\Ima(z_2)\leq -r-1\}$.
\end{lem}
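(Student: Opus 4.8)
The plan is to write down an explicit homotopy that slides everything vertically downward in the $\Ima(z_2)$-direction while leaving the "horizontal" coordinates essentially untouched, and then check that it restricts correctly to $Y_r$. The target region $T:=\{\Ima(z_2)\le r-1\}$ is a closed half-space, and both $\C^2$ and $Y_r$ contain it because on $T$ one has $\Ima(z_2)\le r-1<r\le|z_1|^2+\Ree(z_2)^2+r$. So the content is just to push points down into $T$ without leaving $Y_r$ along the way.

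**First I would** introduce the defining function $\varphi(z_1,z_2):=\Ima(z_2)$ and note that $Y_r$ and $T$ are both "downward closed" in $\varphi$ in the sense that if a point lies in the set and we decrease $\Ima(z_2)$ while keeping $z_1$ and $\Ree(z_2)$ fixed, it stays in the set — for $Y_r$ this is immediate since decreasing $\Ima(z_2)$ only makes the strict inequality easier, and for $T$ it is obvious. Then I would define, for $(z_1,z_2)\in\C^2$ and $t\in[0,1]$,
\[
h_t(z_1,z_2):=\bigl(z_1,\ \Ree(z_2)+i\bigl((1-t)\Ima(z_2)+t\,\psi(z_1,z_2)\bigr)\bigr),
\]
where $\psi(z_1,z_2):=\min(\Ima(z_2),\,r-1)$. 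At $t=0$ this is the identity; at $t=1$ it sends every point into $T$ (the imaginary part becomes $\le r-1$); and for points already in $T$ we have $\psi=\Ima(z_2)$, so $h_t$ fixes $T$ pointwise for all $t$ — giving a strong deformation retraction. Continuity of $(t,z)\mapsto h_t(z)$ is clear since $\psi$ is continuous. For each fixed $t$ and each point, the imaginary part of $h_t(z_1,z_2)$ is a convex combination of $\Ima(z_2)$ and $\psi(z_1,z_2)\le\Ima(z_2)$, hence lies between $\psi(z_1,z_2)$ and $\Ima(z_2)$; in particular it is $\le\Ima(z_2)$, so by the downward-closedness observation above, $h_t$ maps $\C^2$ to $\C^2$ (trivial) and $Y_r$ to $Y_r$. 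It also maps $T$ to $T$ since the new imaginary part is $\le\max(\Ima(z_2),r-1)=r-1$ on $T$; actually the cleanest statement is that $h_1$ lands in $T$ for every starting point.

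**The only point requiring care** — and the "main obstacle," though it is minor — is to make sure the homotopy is genuinely a homotopy of maps $\C^2\to\C^2$ (so that the single family $(h_t)$ simultaneously witnesses both retractions, as the lemma demands) rather than two unrelated homotopies; the choice above does exactly this because $\psi$ and the convex-combination formula make no reference to $r$ beyond the constant $r-1$, so the formula is defined on all of $\C^2$ and manifestly preserves $Y_r$. One should also confirm that $h_t$ restricted to $Y_r$ has image in $Y_r$ and not merely in $\C^2$; this is the downward-closedness check already noted. A tiny cosmetic remark: one may replace $\min(\Ima(z_2),r-1)$ by any continuous, monotone "capping" function agreeing with the identity on $(-\infty,r-1]$ and bounded above by $r-1$, if one prefers smoothness, but continuity is all the lemma asks for. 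With these observations the verification is entirely routine, so I would present the formula, the three bullet-point-style checks ($h_0=\Id$, $h_1$ lands in $T$, $h_t|_T=\Id$), and the one inequality $\Ima(h_t(z))\le\Ima(z)$ that secures invariance of $Y_r$, and conclude.
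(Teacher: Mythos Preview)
Your proposal is correct and is essentially identical to the paper's proof: unwinding your formula, for $y=\Ima(z_2)\ge r-1$ you get $\Ima\bigl(h_t(z_1,z_2)\bigr)=(1-t)y+t(r-1)$ and for $y\le r-1$ you get the identity, which is exactly the piecewise definition the paper writes down (just packaged via the $\min$ function). The paper simply states the formula and leaves the routine checks to the reader, whereas you spell them out.
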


\begin{proof}
The homotopy $(h_t)_{t\in[0,1]}$ defined by
\phantom{\qedhere}
\begin{equation*}
\begin{alignedat}{5}
\label{eqhomo}
&h_t(z_1,z_2)=(z_1,z_2-it(\Ima(z_2)+r+1))&&\textrm{ \hspace{1em}if } \Ima(z_2)\geq -r-1\\
&h_t(z_1,z_2)=(z_1,z_2)&&\textrm{ \hspace{1em}if } \Ima(z_2)\leq -r-1
\end{alignedat}
\end{equation*}
has the required properties.
\end{proof}

We will make use of the Oka property and of the contractibility of $Y_r$ through the next extension result, which is an application of theorems of Forstneri\v{c} (see~\cite[Theorem 1.1]{Forstextension} and the more general \cite[Theorem 5.4.4]{Forstneric}). 

\begin{prop}
\label{Okaprop}
Fix $r\in\R$. 
Let $S$ be a reduced Stein space and let~$S'$ be a (possibly nonreduced) closed complex subspace of~$S$. Let $f':S'\to Y_r$ be a holomorphic map. Then there exists a holomorphic map $f:S\to Y_r$ with $f|_{S'}=f'$.
\end{prop}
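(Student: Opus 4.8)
The plan is to obtain the extension by feeding two inputs into Forstnerič's Oka-principle extension theorem for maps into Oka manifolds: that $Y_r$ is Oka, which is exactly Proposition \ref{propFW}, and that $f'$ already extends to a \emph{continuous} map $S\to Y_r$. The point of having arranged that $Y_r$ be contractible (Lemma \ref{lemhomo}) is precisely to guarantee the existence of this continuous extension.

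For the continuous extension, I would first reduce to the case where $S'$ is reduced: since $Y_r$ is a complex manifold, hence reduced, a holomorphic map from the possibly nonreduced space $S'$ to $Y_r$ is the same datum as a holomorphic map $S'_{\red}\to Y_r$, and $S'_{\red}$ is a reduced closed complex subspace of the reduced Stein space $S$; an extension fixing $S'_{\red}$ automatically fixes $S'$. Then I would argue that $Y_r$, being an open subset of $\C^2$, is an absolute neighborhood retract, and being contractible (by Lemma \ref{lemhomo}, which exhibits a strong deformation retraction onto a convex subset) it is in fact an absolute retract, hence an absolute extensor for the class of metrizable spaces. Since $S$ is second-countable it is metrizable, and $S'$ is closed in $S$; therefore $f'\colon S'\to Y_r$ extends to a continuous map $\widetilde{f}\colon S\to Y_r$. (One could instead use the explicit homotopy $(h_t)$ of Lemma \ref{lemhomo} to extend $h_1\circ f'$ by hand into the convex half-space, but then one must transport this back to $f'$ along $(h_t)$ using that the inclusion $S'\hookrightarrow S$ is a cofibration, which is less clean.)

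Finally, I would invoke Forstnerič's theorem (\cite[Theorem 1.1]{Forstextension}, or in the generality of reduced Stein spaces \cite[Theorem 5.4.4]{Forstneric}): since $Y_r$ is Oka, $S$ is a reduced Stein space, $S'\subset S$ is a closed complex subvariety, $f'\colon S'\to Y_r$ is holomorphic, and $\widetilde{f}\colon S\to Y_r$ is a continuous extension of $f'$, the map $\widetilde{f}$ may be deformed, keeping it fixed on $S'$, to a holomorphic map $f\colon S\to Y_r$. This $f$ satisfies $f|_{S'}=f'$, which is what is claimed.

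The substance of the argument lives entirely in the cited black boxes; the only point genuinely requiring care is the continuous-extension step, i.e. making precise how contractibility of $Y_r$ is used. I do not anticipate a real obstacle here, beyond verifying that the version of Forstnerič's extension theorem one cites is stated for possibly singular reduced Stein spaces and for maps that are merely continuous on $S$ and holomorphic on $S'$ — which is the content of \cite[Theorem 5.4.4]{Forstneric}.
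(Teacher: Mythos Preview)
Your reduction to the case where $S'$ is reduced is incorrect, and this is a genuine gap. The claim that a holomorphic map from a possibly nonreduced space $S'$ into a complex manifold is the same datum as a map from $S'_{\red}$ is false: morphisms \emph{out of} a nonreduced space carry jet information that is lost upon passing to the reduction (for instance, a map from a double point into $\C$ records a value and a tangent vector). Consequently an extension $f:S\to Y_r$ with $f|_{S'_{\red}}=f'|_{S'_{\red}}$ need not satisfy $f|_{S'}=f'$. This matters for the intended application in Theorem~\ref{prop}, where the proposition is invoked with $S'=V_{d,j}$ explicitly allowed to be nonreduced, and where agreement on the nonreduced intersection is exactly what is required for the gluing via Lemma~\ref{lemglue}.

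The paper circumvents this by a different route to the continuous extension. Since $S$ is Stein, the restriction $\cO(S)\to\cO(S')$ is surjective, so $f'$ lifts to a holomorphic map $f_1:S\to\C^2$ with $f_1|_{S'}=f'$ in the full sense (including nilpotents). The explicit homotopy of Lemma~\ref{lemhomo}, combined with a Urysohn function, is then used to push $f_1$ into $Y_r$ without modifying it on a neighborhood of $S'$. The resulting $f_2$ is holomorphic near $S'$ with $f_2|_{S'}=f'$, so one may apply the jet-interpolation form of \cite[Theorem~5.4.4]{Forstneric} with $\cS$ equal to the ideal sheaf of the nonreduced $S'$. Your ANR/AR argument, by contrast, only produces a map that is continuous on $S$ and agrees with $f'$ on the underlying set $|S'|$; even setting the reduction error aside, such a map does not carry the jet data along $S'$ that the interpolation theorem requires as input in the nonreduced case. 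For reduced $S'$ your argument is correct and essentially equivalent to the paper's.
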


\begin{proof}
Since $S$ is Stein,  the restriction map $\cO(S)\to\cO(S')$ is onto. It follows that there exists a holomorphic map $f_1:S\to\C^2$ such that $f_1|_{S'}=f'$.  

Define $U:=f_1^{-1}(Y_r)$. It is an open neighborhood of $S'$ in $S$.  Let $Z\subset U$ be a closed neighborhood of $S'$ in $U$. By the Tietze--Urysohn extension theorem, there exists a continuous map $\tau:S\to[0,1]$ which is equal to $0$ on $Z$ and to $1$ on $S\setminus U$. 

Define a continuous map $f_2:S\to Y_r$ by the formula $f_2(s)=h_{\tau(s)}(f_1(s))$, where~$(h_t)_{t\in[0,1]}$ is the homotopy given by Lemma \ref{lemhomo}. Since $f_2$ is equal to $f_1$ on~$U$,  it is holomorphic in a neighborhood of $S'$ and satisfies $f_2|_{S'}=f'$.

As $Y_r$ is Oka by Proposition \ref{propFW}, it now follows from the jet interpolation part of \cite[Theorem~5.4.4]{Forstneric} (applied with $\pi$ equal to be the first projection map $S\times Y\to S$ and with~$\cS$ equal to the ideal sheaf of $S'$ in $S$) that $f_2$ is homotopic to a holomorphic map $f:S\to Y_r$ with~$f|_{S'}=f_2|_{S'}$, and hence $f|_{S'}=f'$. This completes the proof of the proposition.
\end{proof}

\section{Holomorphic maps with finite-dimensional fibers}
\label{sec2}

The next theorem is the key to our main results.

\begin{thm}
\label{prop}
Let $S$ be a Stein space. Then there exists a holomorphic map ${f:S\to\C^2}$ all of whose fibers are finite-dimensional. 
\end{thm}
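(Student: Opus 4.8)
The plan is to build the map $f:S\to\C^2$ as a limit of an exhaustion argument, using Proposition~\ref{Okaprop} to control the fibers at each finite stage. First I would reduce to the case where $S$ is reduced: the fibers of a holomorphic map $S\to\C^2$ and of the induced map $S_{\red}\to\C^2$ have the same underlying topological spaces, and finite-dimensionality is a condition on the underlying reduced spaces, so it suffices to treat $S_{\red}$; since a closed subspace of a Stein space is Stein and $S_{\red}$ is such a subspace, we may assume $S$ reduced. Next, fix a holomorphic embedding-type exhaustion: choose a strictly plurisubharmonic exhaustion function $\rho:S\to\R$, and for a suitable increasing sequence $c_n\to\infty$ of regular-type values set $S_n:=\{\rho\leq c_n\}$, a compact exhaustion of $S$ with $S_n$ contained in the interior of $S_{n+1}$.

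The core of the argument is the following inductive construction. I want to produce holomorphic maps $f_n:S\to\C^2$ converging uniformly on compacts, together with the property that the limit $f$ restricted to each $S_n$ has finite-dimensional fibers. The mechanism is this: on $S_n$, which is holomorphically convex and hence (being compact in a Stein space) admits a Stein neighborhood that embeds as a closed subspace of some $\C^{N_n}$, one can find a holomorphic map $g_n$ on a neighborhood of $S_n$ whose fibers are finite-dimensional — indeed a generic linear projection $\C^{N_n}\to\C^2$ restricted to the embedded neighborhood already has this property by a dimension count, since the embedded space has locally bounded dimension near the compact set $S_n$. The subtle point is that $g_n$ is only defined near $S_n$, not on all of $S$. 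Here Proposition~\ref{Okaprop} enters: I would first arrange $g_n$ to take values in a translate of the Oka domain $Y_r$ (after composing with an affine automorphism of $\C^2$ moving the relevant compact image into $Y_r$, which is possible since $Y_r$ contains arbitrarily large balls, being the region below a paraboloid), view $S_n$ (or a slightly shrunk Stein neighborhood of it) as a closed complex subspace of $S$ — or more carefully, realize the data as an extension problem across the closed subspace — and extend $g_n$ to a global holomorphic map $S\to Y_r\subset\C^2$. Doing this while also prescribing agreement with the previously constructed $f_{n-1}$ up to a small error on $S_{n-1}$ is exactly the \emph{interpolation-plus-approximation} content one needs; Proposition~\ref{Okaprop} gives interpolation along a closed subspace, and the Oka property itself (Proposition~\ref{propFW}) gives the uniform approximation on compacts. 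Combining: I extend $g_n$ to $f_n:S\to\C^2$ so that $f_n$ agrees with $g_n$ on $S_n$ (forcing finite-dimensional fibers there) and $|f_n-f_{n-1}|<\varepsilon_n$ on $S_{n-1}$ for a rapidly decreasing $\varepsilon_n$.

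Once the sequence is built, $f:=\lim_n f_n$ exists as a holomorphic map $S\to\C^2$. To see its fibers are finite-dimensional, fix a point $w\in\C^2$ and a compact $S_n$; on $S_n$ the tail corrections are uniformly small, so $f$ is a small perturbation of $f_n=g_n$ there, but one must ensure the perturbation does not enlarge the fiber dimension. The clean way to guarantee this is to be greedier at stage $n$: instead of merely matching $f_{n-1}$, also interpolate $f_n=g_n$ \emph{to infinite order} (a jet condition) along the analytic subset $g_n^{-1}(w)\cap S_n$ for the relevant $w$ — but since $w$ ranges over all of $\C^2$ this is not directly feasible, so instead I would argue dimension-theoretically: the fiber $f^{-1}(w)\cap S_n$ is contained in a set where $f$ and $f_n$ are uniformly close, and by an argument with the local parametrization/Remmert's proper mapping theorem one shows $\dim(f^{-1}(w)\cap S_n)\leq \sup_{w'}\dim(g_n^{-1}(w')\cap S_n)<\infty$, using that a small holomorphic perturbation of a map with $d$-dimensional fibers over a compact set still has fibers of dimension $\leq d$ there (upper semicontinuity of fiber dimension under $C^0$-small holomorphic deformations). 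Then $\dim f^{-1}(w)=\sup_n \dim(f^{-1}(w)\cap S_n)$, and if this supremum is controlled by a single $n$ (which happens if the stabilization of $g_n$ on $S_{n-1}$ is built into the construction) it is finite; more honestly, one fixes $w$, picks $n$ with $f^{-1}(w)\subset S_n$ impossible in general, so the real finish is to observe that finite-dimensionality of $f^{-1}(w)$ is a \emph{local} property, each point of $f^{-1}(w)$ lies in some $S_n$, and near that point $f$ is a small perturbation of the finite-fiber map $g_n$, hence $f$ has finite-dimensional fiber at that point. I expect the main obstacle to be precisely this last point: arranging the extension/approximation at stage $n$ so that passing to the limit genuinely \emph{preserves} the local finite-dimensionality of fibers rather than merely the approximate values — i.e. getting the jet-interpolation bookkeeping in Proposition~\ref{Okaprop} to cooperate with a dimension-count that is stable under the subsequent corrections.
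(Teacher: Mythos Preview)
Your proposal has a genuine gap at the very end, and it is fatal rather than a bookkeeping issue. You conclude by saying that ``finite-dimensionality of $f^{-1}(w)$ is a \emph{local} property'': it is not. A complex space is finite-dimensional when the function $s\mapsto\dim_s$ is \emph{bounded}; every complex space is locally finite-dimensional, so your final sentence is vacuously true and proves nothing. Worse, the whole compact-exhaustion strategy collapses for the same reason: each $S_n=\{\rho\le c_n\}$ is compact, hence already has bounded dimension, so \emph{any} holomorphic map has finite-dimensional fibers on $S_n$. The constraint you impose on $g_n$ is empty, and your construction exerts no control whatsoever on $\dim(f^{-1}(w)\cap S_n)$ as $n\to\infty$, which is precisely the quantity that must stay bounded. (There is also a technical slip: the $S_n$ are compact subsets, not closed complex subspaces, so Proposition~\ref{Okaprop} does not apply to them; but this is secondary.)

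The paper's proof is organized around a completely different decomposition: not a compact exhaustion, but the irreducible components $(S_k)_{k\ge 0}$ of $S$, each of which is already finite-dimensional. The key idea you are missing is that the Oka domains $Y_r$ form a \emph{decreasing} family with $\bigcap_r Y_r=\varnothing$. The map $f$ is built by an inductive extension (via Proposition~\ref{Okaprop}) over the irreducible pieces of all finite intersections of the $S_k$, arranged so that $f(S_k)\subset Y_k$ for every $k$. Then any $w\in\C^2$ lies in only finitely many $Y_k$, hence $f^{-1}(w)$ meets only finitely many of the $S_k$, and is therefore finite-dimensional. Proposition~\ref{Okaprop} is used purely for interpolation along closed analytic subspaces (the intersections of components), with the specific target $Y_{r}$ chosen at each step to enforce the nesting; no approximation or limiting process is involved.
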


\begin{proof}
Let $S^{\red}$ be the reduction of $S$. Since $S$ is Stein, the restriction map ${\cO(S)\to\cO(S^{\red})}$ is onto,  and we may assume that $S$ is reduced.

Let $(S_k)_{0\leq k < n}$ with $n\in\N\cup\{+\infty\}$ be the irreducible components of $S$,  viewed as reduced closed complex subspaces of $S$. 
Let $\Theta$ be the collection of all reduced and irreducible closed complex subspaces of $S$ that may be obtained as irreducible components of an intersection of finitely many of the $S_k$. The set $\Theta$ is at most countable, and any compact subset of $S$ meets at most finitely many elements of~$\Theta$.  

For $d\geq 0$, we let $\Theta_d\subset\Theta$ be the set of all $d$-dimensional elements of $\Theta$.  Let~$(Z_{d,j})_{0\leq j< m(d)}$ with $m(d)\in\N\cup\{+\infty\}$ be an enumeration of the elements of~$\Theta_d$. 
We henceforth identify $\Theta$ with the set of all pairs $(d,j)$ with $d\geq 0$ and $0\leq j< m(d)$ and endow it with the lexicographical order. It is a well-ordered set.
For all~$(d,j)\in\Theta$,  we view $W_{d,j}:=\cup_{(d',j')\leq(d,j)}Z_{d',j'}$ and $W'_{d,j}:=\cup_{(d',j')<(d,j)}Z_{d',j'}$ as reduced closed complex subspaces of $S$. Finally, for $(d,j)\in\Theta$, we let $r(d,j)$ be the biggest integer~$k\geq 1$ such that $Z_{d,j}\subset S_k$.

We will now construct holomorphic functions $f_{d,j}:W_{d,j}\to \C^2$ for all ${(d,j)\in \Theta}$ with the property that $f_{d,j}|_{W_{d',j'}}=f_{d',j'}$ and $f_{d,j}(Z_{d',j'})\subset Y_{r(d',j')}$ whenever $(d',j')\leq (d,j)$. The construction is by induction on the pair $(d,j)\in\Theta$ (which is legitimate since $\Theta$ is well-ordered).

Assume that the $f_{d',j'}$ for $(d',j')<(d,j)$ have been constructed.  Since these maps are compatible,  they glue to give rise to a holomorphic map $f'_{d,j}:W'_{d,j}\to \C^2$.  Now $W_{d,j}=W'_{d,j}\cup Z_{d,j}$.  Define $V_{d,j}:=W'_{d,j}\cap Z_{d,j}$. It is a possibly nonreduced closed complex subspace of $S$.  Note that $V_{d,j}$ is set-theoretically a union of some of the~$Z_{d',j'}$ with $(d',j')<(d,j)$.  If $Z_{d',j'}\subset V_{d,j}$ is one of them, then~$Z_{d',j'}\subset Z_{d,j}$ and hence $r(d',j')\geq r(d,j)$. Since $f_{d',j'}(Z_{d',j'})\subset Y_{r(d',j')}\subset Y_{r(d,j)}$, we deduce that~$f'_{d,j}(V_{d,j})\subset Y_{r(d,j)}$. 
Proposition \ref{Okaprop} now implies that the holomorphic map~${f'_{d,j}|_{V_{d,j}}:V_{d,j}\to Y_{r(d,j)}}$ extends to a holomorphic map~$f''_{d,j}:Z_{d,j}\to Y_{r(d,j)}$. Since~$f'_{d,j}$ and~$f''_{d,j}$ coincide on $V_{d,j}=W'_{d,j}\cap Z_{d,j}$, they glue (by Lemma \ref{lemglue} below) to give rise to a holomorphic map~$f_{d,j}:W_{d,j}\to \C^2$ with the required properties.

As the $(f_{d,j})_{(d,j)\in\Theta}$ are compatible, they induce a holomorphic map $f:S\to \C^2$.  Let us verify that this map has the required property. 
One has $f(S_k)\subset Y_k$ for all~$0\leq k< n$ (as $S_k$ is one of the $Z_{d,j}$). Since the $(Y_k)_{k\geq 0}$ form a decreasing family of subsets of $\C^2$ with empty intersection, we deduce that any point of $\C^2$ belongs to at most finitely many of the $f(S_k)$.  In other words, any fiber of $f$ intersects at most finitely many of the $S_k$.  It follows that all the fibers of $f$ are finite-dimensional.
\end{proof}

\begin{lem}
\label{lemglue}
Let $S$ be a complex space. Let $S_1$ and $S_2$ be closed complex subspaces of~$S$. Set $T:=S_1\cap S_2$. The following diagram of sheaves on~$S$ is~exact:
\begin{equation}
\label{exact}
\cO_S\xrightarrow{f\mapsto (f|_{S_1},f|_{S_2})}\cO_{S_1}\oplus\cO_{S_2}\xrightarrow{(g,h)\mapsto g|_{T}-h|_{T}} \cO_{T}\to 0.
\end{equation}
If moreover $S$ is reduced and $S=S_1\cup S_2$, then the left arrow of (\ref{exact}) is injective.
\end{lem}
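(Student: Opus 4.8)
The plan is to verify exactness of \eqref{exact} stalk by stalk, since exactness of a sequence of sheaves can be checked on stalks, and then treat the injectivity assertion separately using the reducedness hypothesis. Fix a point $s\in S$. If $s\notin S_1\cap S_2$, then at least one of the $\cO_{S_i,s}$ vanishes, and one checks the claim directly by a small case analysis (if $s$ lies in neither $S_i$, all three terms vanish; if $s$ lies in exactly one, say $S_1$, then $\cO_{S_1\cap S_2,s}=0$ and the surjectivity is trivial while exactness in the middle reduces to injectivity of $\cO_{S,s}\to\cO_{S_1,s}$ on the relevant summand, which holds because $S_1$ is defined by an ideal $\cI_1\subset\cO_S$ and the sequence is $\cO_{S,s}\twoheadrightarrow\cO_{S,s}/\cI_{1,s}$). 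The substantive case is $s\in S_1\cap S_2$.

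For $s\in T=S_1\cap S_2$, write $A:=\cO_{S,s}$, and let $\cI_1,\cI_2\subset A$ be the ideals cutting out $S_1,S_2$ near $s$, so that $\cO_{S_i,s}=A/\cI_i$ and $\cO_{T,s}=A/(\cI_1+\cI_2)$ (this last identity is exactly the statement that $S_1\cap S_2$ is the scheme-theoretic intersection, i.e.\ has ideal sheaf $\cI_1+\cI_2$). The sequence \eqref{exact} at $s$ becomes
\begin{equation*}
A\xrightarrow{\ a\mapsto(\bar a,\bar a)\ }A/\cI_1\oplus A/\cI_2\xrightarrow{\ (\bar b,\bar c)\mapsto \bar b-\bar c\ }A/(\cI_1+\cI_2)\to 0,
\end{equation*}
which is a completely standard and elementary exact sequence of $A$-modules: surjectivity on the right is clear; a pair $(\bar b,\bar c)$ maps to $0$ iff $b-c\in\cI_1+\cI_2$, i.e.\ $b-i_1=c+i_2$ for some $i_j\in\cI_j$, and then this common element $a:=b-i_1=c+i_2$ of $A$ satisfies $\bar a=\bar b$ in $A/\cI_1$ and $\bar a=\bar c$ in $A/\cI_2$, so it comes from $A$. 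This proves exactness at the first two spots.

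Finally, suppose $S$ is reduced and $S=S_1\cup S_2$. Injectivity of the left arrow is again checked on stalks: at $s$, the kernel is $\cI_{1,s}\cap\cI_{2,s}$, and since $S=S_1\cup S_2$ as sets we have $V(\cI_1)\cup V(\cI_2)=\Spec A$ (locally), hence $\cI_1\cap\cI_2$ (indeed already $\cI_1\cdot\cI_2$) is contained in the nilradical of $A$; because $S$ is reduced, $A$ is reduced, so this intersection is $0$. The only point requiring a little care—and the one I would flag as the main obstacle—is the identification $\cO_{T,s}\cong A/(\cI_1+\cI_2)$, i.e.\ pinning down precisely which scheme structure the symbol $S_1\cap S_2$ carries in this paper; once one adopts the scheme-theoretic (fibered-product) intersection, for which the ideal sheaf is $\cI_1+\cI_2$, everything else is routine commutative algebra. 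I would state this convention explicitly at the start of the proof.
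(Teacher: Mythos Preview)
Your proof is correct and takes essentially the same approach as the paper: check exactness on stalks via the elementary fact that $A\to A/I_1\oplus A/I_2\to A/(I_1+I_2)\to 0$ is exact for any two ideals $I_1,I_2$ of a ring $A$ (this handles all points uniformly, so your case analysis for $s\notin T$ is unnecessary, and note that where you wrote ``injectivity'' in the case $s\in S_1\setminus S_2$ you meant, and showed, surjectivity). For the injectivity assertion the paper argues globally---a section in the kernel vanishes at every point of $S=S_1\cup S_2$, hence is zero since $S$ is reduced---which is the same argument you give stalkwise.
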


\begin{proof}
Fix $s\in S$.  Write $A=\cO_{S,s}$ and let $I_1$ (\resp $I_2$) be the ideal of $A$ consisting of germs of functions vanishing on $S_1$ (\resp $S_2$). Then the exactness of (\ref{exact}) at $s$ results from the exactness of 
$A\to A/I_1\oplus A/I_2\to A/\langle I_1,I_2\rangle\to 0$,
which is valid for any two ideals $I_1$ and $I_2$ of a commutative ring $A$.

If $S=S_1\cup S_2$, then a holomorphic function in the kernel of the left arrow of~(\ref{exact}) vanishes at all points and hence vanishes if $S$ is reduced.
\end{proof}

The next proposition shows the optimality of Theorem \ref{prop}.

\begin{prop}
\label{rem}
There exists a Stein space $S$ such that all holomorphic maps $f:S\to\C$ admit an infinite-dimensional fiber.
\end{prop}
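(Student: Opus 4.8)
The idea is to construct a Stein space $S$ so rich with positive-dimensional components that no single holomorphic function to $\C$ can separate all of them. The natural candidate is a disjoint-type union, inside a bigger Stein manifold, of infinitely many $1$-dimensional pieces indexed in a way that forces any $f:S\to\C$ to be constant on infinitely many of them simultaneously and hence to have an infinite-dimensional fiber. Concretely, I would try to take $S\subset \C\times\C$ to be a closed complex subspace whose irreducible components include, for each $n\in\N$, a copy $L_n$ of $\C$ (say $L_n=\{n\}\times\C$ if we want them discretely placed, so that the union is closed and $S$ is Stein by a standard argument — a closed analytic subset of a Stein manifold is Stein), but arranged so that any holomorphic $f\colon S\to\C$ restricted to $\bigsqcup_n L_n$ is forced to repeat a value. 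The key mechanism: the $L_n$ must be ``linked'' so that $f|_{L_n}$ is heavily constrained. A clean way to do this is to add, for each pair $n<m$, a further component joining $L_n$ and $L_m$ at prescribed points, forcing $f(p_{n,m})$ on $L_n$ to equal $f(q_{n,m})$ on $L_m$; but to keep things closed and Stein one must be careful that the resulting analytic set is still closed and that compact sets meet only finitely many components.

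A cleaner route, which I would actually pursue, is to use a single positive-dimensional Stein space on which the algebra of holomorphic functions is ``small'' in the relevant sense. Take $S=\bigsqcup_{n\in\N} \C$, a countable disjoint union of copies of the affine line — this is Stein (disjoint unions of Stein spaces are Stein, being a coproduct of finitely many on each compact piece). Then $\cO(S)=\prod_n\cO(\C)$, and a holomorphic map $f\colon S\to\C$ is just a sequence $(f_n)$ with $f_n\in\cO(\C)$. Such an $f$ has a fiber over $c\in\C$ equal to $\bigsqcup_n f_n^{-1}(c)$, which is infinite-dimensional as soon as $f_n$ is constant equal to $c$ for infinitely many $n$. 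So I must arrange that for \emph{every} holomorphic $f:S\to\C$ there is some $c$ with $f_n\equiv c$ for infinitely many $n$ — but this is false for $\bigsqcup_n\C$ (take $f_n\equiv n$). So the disjoint union alone does not work: I need to glue the components so as to kill the freedom of choosing the constants independently.

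Therefore the construction must genuinely entangle the components. Here is the plan I would commit to. Fix an injection $\N^2\hookrightarrow\C$ with discrete image and build $S\subset\C^2$ as follows: for each $n$, let $L_n$ be a line, and for each $n$ introduce countably many chains connecting $L_n$ to $L_{n+1}$, so that $S$ is connected and any $f\in\cO(S)$ satisfies, for infinitely many $k$, a relation equating two of its values; iterating, one extracts an infinite subsequence of components on which $f$ takes a common value, hence an infinite-dimensional fiber. The delicate points are (i) ensuring $S$, as an analytic subset of $\C^2$ (or a higher $\C^N$), is \emph{closed} — this is where placing the gluing points to accumulate only at infinity matters — so that $S$ is Stein; and (ii) making the entanglement strong enough that it constrains \emph{all} of $\cO(S)$, not just continuous functions. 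Step (ii) is the main obstacle: one needs a combinatorial pigeonhole argument showing that a holomorphic function respecting all the prescribed identifications must be eventually constant along an infinite subfamily. I expect this to follow by choosing the gluing data according to a tree or bipartite structure rich enough that independence of the constants $c_n$ is impossible, while sparse enough to keep $S$ closed; the verification that such a structure exists, and that it does the job against arbitrary holomorphic (not merely continuous) $f$, is the heart of the proof and the step I would spend the most care on.
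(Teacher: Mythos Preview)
Your plan has a genuine conceptual gap: you are building $S$ out of one-dimensional pieces $L_n\cong\C$, so $S$ itself is one-dimensional, and hence \emph{every} fiber of \emph{every} holomorphic map $S\to\C$ is at most one-dimensional. The sentence ``$\bigsqcup_n f_n^{-1}(c)$, which is infinite-dimensional as soon as $f_n$ is constant equal to $c$ for infinitely many $n$'' is simply false: a countable disjoint union of copies of $\C$ is a one-dimensional complex space, not an infinite-dimensional one. Dimension is a local notion; having infinitely many irreducible components is not the same thing. No amount of clever gluing among copies of $\C$ can produce an infinite-dimensional fiber inside a one-dimensional $S$.

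The paper's construction avoids this by taking $S_n=\C^n$, so that the components have unbounded dimension, and gluing $S_n$ to $S_{n+1}$ transversally along a hyperplane (so that $S$ is connected and Stein, the latter because its normalization $\bigsqcup_n\C^n$ is Stein). Given $f:S\to\C$, either $f|_{S_n}$ is constant for all large $n$, in which case connectedness forces a common constant value and a single infinite-dimensional fiber; or $f|_{S_n}$ is nonconstant for infinitely many $n$, and then Picard's little theorem (each nonconstant entire function on $\C^n$ omits at most one value) shows that every $c\in\C$, with at most one exception, lies in $f(S_n)$ for infinitely many such $n$, whence $f^{-1}(c)$ meets $S_n$ in a set of dimension $\geq n-1$ for arbitrarily large $n$. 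The two ingredients you are missing are thus: components of growing dimension, and Picard's theorem to control the nonconstant case.
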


\begin{proof}
For $n\geq 1$, set $S_n:=\C^n$.  Define $T_n:=\{(z_1,\dots,z_n)\in S_n\mid z_n=0\}$ and $T'_n:=\{(z_1,\dots,z_{n+1})\in S_{n+1}\mid z_n=0\textrm{ and }z_{n+1}=1\}$. Let $\varphi_n:T_n\isoto T'_n$  be the isomorphism given by $\varphi_n(z_1,\dots,z_{n-1},0)=(z_1,\dots,z_{n-1},0,1)$.
Let $S$ be the complex space obtained from $\sqcup_{n\geq 1}S_n$ by gluing $S_n$ and $S_{n+1}$ transversally 
along~$T_n$ and $T_n'$ by means of $\varphi_n$ (for all $n\geq 1$).
The complex space $S$ is Stein because so is its normalization~$\sqcup_{n\geq 1}S_n$ (see~\cite[Theorem 1]{Narasimhannormalization}).

Let $f:S\to\C$ be a holomorphic map. Assume first that $f|_{S_n}$ is constant for all~$n\gg 0$. As the subset  $S_n\cap S_{n+1}$ of $S$ is nonempty, the value taken by $f|_{S_n}$ does not depend on $n\gg0$. It follows that $f$ has a (single) infinite-dimensional fiber. 

Assume now that the set $\Sigma:=\{n\in\N_{\geq 1}\mid f|_{S_n}\textrm{ is not constant}\}$ is infinite.  For all~$n\in\Sigma$, the map $f|_{S_n}:S_n\to\C$ omits at most one value, by Picard's little theorem.  
We deduce that at most one complex number is not the image of $f|_{S_n}$ for all but finitely many $n\in\Sigma$.  Consequently, all complex numbers except possibly one  are in the image of infinitely of the $f|_{S_n}$.  As the nonempty fibers of $f|_{S_n}$ have dimension~$\geq n-1$, we deduce that all the fibers of $f$ except possibly one are infinite-dimensional.
\end{proof}

\section{Morphisms of Stein algebras}
\label{sec3}

\begin{thm}
\label{characters}
Let $S$ be a Stein space.  Let $\chi:\cO(S)\to \C$ be a $\C$\nobreakdash-al\-ge\-bra morphism. Then $\chi$ is continuous and there exists $s\in S$ such that $\chi(f)=f(s)$ for all $f\in\cO(S)$.
\end{thm}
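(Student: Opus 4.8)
The plan is to reduce to the finite-dimensional case of Forster by using Theorem \ref{prop}. First I would invoke Theorem \ref{prop} to obtain a holomorphic map $f=(f_1,f_2):S\to\C^2$ all of whose fibers are finite-dimensional. Applying the character $\chi$ to the two coordinate functions $f_1,f_2\in\cO(S)$ produces a point $(\lambda_1,\lambda_2):=(\chi(f_1),\chi(f_2))\in\C^2$, and I would set $F:=f^{-1}(\lambda_1,\lambda_2)$, a finite-dimensional closed complex subspace of $S$. The first claim to establish is that $F$ is nonempty: the functions $f_1-\lambda_1$ and $f_2-\lambda_2$ lie in $\ker(\chi)$, hence generate a proper ideal of $\cO(S)$ (as $\chi$ is a $\C$-algebra morphism to $\C$, its kernel is a proper ideal), and therefore by the Nullstellensatz for Stein algebras (Forster, \cite{Forster}; see also \cite[V, \S 7]{GRStein} for the analytic Nullstellensatz) they have a common zero on $S$, i.e.\ $F\neq\emptyset$.

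Next I would factor $\chi$ through the restriction map $\cO(S)\to\cO(F)$. The key point is that $\chi$ vanishes on the ideal $\cI(F)\subset\cO(S)$ of holomorphic functions vanishing on $F$; granting this, $\chi$ induces a $\C$-algebra morphism $\overline{\chi}:\cO(S)/\cI(F)\to\C$, and since $S$ is Stein the restriction $\cO(S)\to\cO(F)$ is surjective with kernel exactly the ideal of functions vanishing on $F$ (by Cartan's Theorem A/B, as $F$ is a closed complex subspace defined by a coherent ideal sheaf), so we get a $\C$-algebra morphism $\chi_F:\cO(F)\to\C$. Because $F$ is a finite-dimensional Stein space, Forster's automatic continuity theorem \cite[Theorem 5]{ForsterUniqueness} applies: $\chi_F$ is continuous and is the evaluation at some point $s\in F$. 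Pulling back along $\cO(S)\to\cO(F)$, we conclude that $\chi(g)=g(s)$ for all $g\in\cO(S)$, and continuity of $\chi$ follows from continuity of the restriction map $\cO(S)\to\cO(F)$ (a standard fact about the Fr\'echet topologies, \cite[V, \S 6]{GRStein}) composed with the continuous $\chi_F$.

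The step I expect to be the main obstacle is showing that $\chi$ kills $\cI(F)$, equivalently that every $g\in\cO(S)$ vanishing on $F$ lies in $\ker\chi$. One cannot argue purely algebraically since $\cI(F)$ need not be generated by $f_1-\lambda_1,f_2-\lambda_2$. The natural route is to use a Nullstellensatz-type statement again, but more carefully: one needs that if $g$ vanishes on the common zero locus $F$ of $f_1-\lambda_1$ and $f_2-\lambda_2$, then some power (or, locally, germ) of $g$ lies in the ideal generated by these two functions, at least after passing to a suitable completion or localization that $\chi$ factors through. Here the right framework may be to observe that $\ker\chi$ is a maximal ideal of $\cO(S)$ with residue field $\C$, and to use the Nullstellensatz for Stein algebras in the form that describes maximal ideals: Forster's theory identifies the "finite" part of the spectrum. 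An alternative, possibly cleaner, approach avoids this difficulty entirely: instead of working with a single fiber, use $f$ to write $S$ as exhausted by the preimages $f^{-1}(\overline{B_n})$ of closed balls, reduce the problem of continuity to a statement on each piece, and use that $\chi$, being a character, is determined by its behavior on an exhaustion; but identifying the point $s$ still ultimately requires the Nullstellensatz input. I would therefore aim to isolate precisely the Nullstellensatz statement needed (common zeros of a proper ideal, plus the description of $\cI(F)$ via coherence) and cite it from \cite{Forster} or \cite{GRStein}, making the reduction to \cite[Theorem 5]{ForsterUniqueness} the formal backbone of the argument.
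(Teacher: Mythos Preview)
Your overall strategy matches the paper's exactly: use Theorem \ref{prop} to land in a finite-dimensional fiber, factor $\chi$ through that fiber, and invoke Forster's finite-dimensional result. The ``main obstacle'' you flag, however, is self-inflicted. You are trying to factor $\chi$ through $\cO(F_{\red})$, whose defining ideal $\cI(F)$ is the radical ideal of functions vanishing on the set $F$ and indeed need not be generated by $f_1-\lambda_1,f_2-\lambda_2$. The fix is to work instead with the (possibly non-reduced) closed complex subspace $T\subset S$ defined by the coherent ideal sheaf $\cI$ generated by $f_1-\lambda_1$ and $f_2-\lambda_2$. Since $S$ is Stein, the surjection $\cO_S^2\twoheadrightarrow\cI$ has coherent kernel with vanishing $H^1$, so $H^0(S,\cI)$ is exactly the ideal of $\cO(S)$ generated by $f_1-\lambda_1,f_2-\lambda_2$; and the short exact sequence $0\to\cI\to\cO_S\to\cO_T\to 0$ shows that this is precisely the kernel of the (surjective) restriction $\cO(S)\to\cO(T)$. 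As $\chi$ visibly kills $f_1-\lambda_1$ and $f_2-\lambda_2$, it factors through $\cO(T)$ immediately, with no Nullstellensatz needed. (Nonemptiness of $T$ is then automatic: the factorization forces $\cO(T)\neq 0$.) Forster's theorem \cite[Theorem 5]{ForsterUniqueness} applies to arbitrary finite-dimensional Stein spaces, reduced or not, so the rest of your argument goes through unchanged.

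This is exactly what the paper does: it defines $T$ as the scheme-theoretic fiber, cites \cite[Lemma 1.7]{Ephraim} for the factorization $\chi=\chi_T\circ r_{S,T}$ (which encapsulates the Cartan~B argument above), applies \cite[Theorem 5]{ForsterUniqueness} to get continuity of $\chi_T$ and hence of $\chi$, and then appeals to \cite[Satz 1]{Forster} to produce the point $s\in S$. Your version extracts the point $s$ already inside $T$ from Forster's finite-dimensional theorem, which is a minor and harmless variation.
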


\begin{proof}
Let $f:S\to\C^2$ be as in Theorem \ref{prop}. Let $(f_i)_{1\leq i\leq 2}$ be the components of~$f$.  Set $\lambda_i:=\chi(f_i)\in\C$.  Let $T\subset S$ be the closed complex subspace defined by the equations $\{f_i=\lambda_i\}_{1\leq i\leq 2}$.  
Let $r_{S,T}:\cO(S)\to\cO(T)$ be the restriction map, which is continuous by \cite[V, \S 6.4 Theorem 6]{GRStein}.
By \cite[Lemma 1.7]{Ephraim},  there exists a morphism of $\C$-algebras $\chi_T:\cO(T)\to \C$ such that $\chi=\chi_T\circ r_{S,T}$.

 Our choice of $f$ implies that $T$ is a finite-dimensional Stein space. It therefore follows from Forster's theorem \cite[Theorem 5]{ForsterUniqueness} that $\chi_T$ is continuous and hence that so is $\chi$. 
 Another theorem of Forster \cite[Satz 1]{Forster} then implies that there exists $s\in S$ such that $\chi(f)=f(s)$ for all $f\in\cO(S)$.
\end{proof}

\begin{thm}
\label{thcont}
Any $\C$-algebra morphism between Stein algebras is continuous.
\end{thm}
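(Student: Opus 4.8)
The plan is to reduce the statement to Theorem \ref{characters} by identifying points of a Stein space with characters of its algebra. Let $u:\cO(T)\to\cO(S)$ be a $\C$-algebra morphism between Stein algebras, where $S$ and $T$ are Stein spaces. First I would recall, from Forster's work, that the points of $T$ are in bijection with the $\C$-algebra characters $\cO(T)\to\C$: this is exactly the content of \cite[Satz 1]{Forster} combined with Theorem \ref{characters}, which guarantees every such character is an evaluation $f\mapsto f(t)$ at a unique point $t\in T$. So for each $s\in S$, the composite $\cO(T)\xrightarrow{u}\cO(S)\xrightarrow{\mathrm{ev}_s}\C$ is a character of $\cO(T)$, hence equals $\mathrm{ev}_{\varphi(s)}$ for a unique point $\varphi(s)\in T$. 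This defines a map of sets $\varphi:S\to T$ characterized by $u(g)(s)=g(\varphi(s))$ for all $g\in\cO(T)$ and $s\in S$, i.e. $u(g)=g\circ\varphi$.

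Next I would upgrade $\varphi$ to a holomorphic map $S\to T$. The point is that $T$, being Stein, embeds as a closed complex subspace of some $\C^N$ (if $T$ has infinite embedding dimension this must be done locally, or one argues via the coordinate functions directly); writing $z_1,\dots,z_N\in\cO(T)$ for the restrictions of the coordinate functions, the maps $u(z_1),\dots,u(z_N)\in\cO(S)$ are holomorphic and their joint values at $s$ are the coordinates of $\varphi(s)$, which lands in $T$ by construction. Hence $\varphi$ is holomorphic into $\C^N$ with image in $T$, so holomorphic as a map $S\to T$. (When $T$ does not have finite embedding dimension one covers $T$ by open Stein subspaces with finite embedding dimension, checks holomorphy of $\varphi$ on each preimage, and glues; holomorphy is local on the source.) It then follows that the pullback map $\varphi^*:\cO(T)\to\cO(S)$ is continuous for the Fréchet topologies — this is standard, e.g. \cite[V, \S 6]{GRStein} — since $\varphi^*$ of a uniform-on-compacta estimate is again one. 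But $\varphi^*=u$ by the defining property of $\varphi$, so $u$ is continuous.

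I expect the main obstacle to be the infinite-dimensionality bookkeeping in the second step: arranging the passage from "$\varphi$ is a set map compatible with all characters" to "$\varphi$ is holomorphic" without assuming $T$ has finite embedding dimension. Everything is genuinely local on $T$, and each point of $T$ has a Stein open neighborhood of finite embedding dimension (Narasimhan), so this is a matter of care rather than a real difficulty — but it is the step where the new input of this paper, namely Theorem \ref{characters} (itself resting on Theorem \ref{prop}), is actually used, since it is precisely what makes the character-theoretic description of points available for arbitrary, not-necessarily-finite-dimensional, Stein spaces. Once $\varphi$ is known to be holomorphic, the continuity of $u=\varphi^*$ is automatic and the proof is complete; combining this with Forster's theorem \cite[Satz 1]{Forster} yields Theorem \ref{thmain}.

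\begin{proof}
Let $u\colon\cO(T)\to\cO(S)$ be a morphism of $\C$-algebras between Stein algebras, associated with Stein spaces $T$ and $S$.

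For each $s\in S$, the composite $\mathrm{ev}_s\circ u\colon\cO(T)\to\C$ is a $\C$-algebra morphism, where $\mathrm{ev}_s\colon\cO(S)\to\C$ denotes evaluation at $s$. By Theorem \ref{characters}, there is a unique point $\varphi(s)\in T$ with $u(g)(s)=g(\varphi(s))$ for all $g\in\cO(T)$. This defines a map of sets $\varphi\colon S\to T$ such that $u(g)=g\circ\varphi$ for every $g\in\cO(T)$.

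We claim $\varphi$ is holomorphic. As this is a local question on $S$, and every point of $T$ has an open Stein neighborhood of finite embedding dimension by \cite[Theorem 6]{Narasimhan}, it suffices to treat the case where $T$ is a closed complex subspace of $\C^N$ for some $N\geq 0$. Let $z_1,\dots,z_N\in\cO(T)$ be the restrictions of the coordinate functions on $\C^N$. Then $u(z_1),\dots,u(z_N)\in\cO(S)$ are holomorphic functions on $S$, and by construction the point $(u(z_1)(s),\dots,u(z_N)(s))\in\C^N$ equals $\varphi(s)\in T$ for every $s\in S$. Hence $\varphi\colon S\to T$ is holomorphic.

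It follows that the pullback map $\varphi^*\colon\cO(T)\to\cO(S)$, $g\mapsto g\circ\varphi$, is continuous for the canonical Fréchet topologies on Stein algebras (see \cite[V, \S 6]{GRStein}). Since $\varphi^*=u$ by the defining property of $\varphi$, the morphism $u$ is continuous.
\end{proof}
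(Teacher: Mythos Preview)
Your approach differs from the paper's: you try to construct a holomorphic $\varphi\colon S\to T$ with $u=\varphi^*$, whereas the paper shows that $\xi^{-1}(\km)$ is closed (hence finitely generated, by \cite[Theorem~2]{Forster}) for every finitely generated maximal ideal $\km\subset\cO(S)$ and then invokes Forster's continuity criterion \cite[Theorem~3]{Forster}. Both routes use Theorem~\ref{characters} as the essential new input, but yours has two genuine gaps. First, the reduction ``it suffices to treat the case where $T$ is a closed complex subspace of $\C^N$'' is not justified: holomorphy of $\varphi$ is local on $S$, not on $T$, and passing to an open $V\subset T$ of finite embedding dimension requires both that $\varphi^{-1}(V)$ be open (i.e.\ that $\varphi$ be continuous, which you have not shown) and a morphism $\cO(V)\to\cO(\varphi^{-1}(V))$, which $u\colon\cO(T)\to\cO(S)$ does not supply. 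The repair is to first prove continuity of $\varphi$ (if $\varphi(s_n)\not\to\varphi(s_0)$, pass to a closed discrete subsequence and interpolate $g\in\cO(T)$ with $g(\varphi(s_n))$ unbounded, contradicting $u(g)\in\cO(S)$), and then, for each $t_0\in T$, choose finitely many \emph{global} $g_1,\dots,g_N\in\cO(T)$ generating $\mathfrak m_{t_0}$, so that $G=(g_1,\dots,g_N)$ embeds a neighborhood $V$ of $t_0$ into $\C^N$; on the now-open set $\varphi^{-1}(V)$ one has $\varphi=(G|_V)^{-1}\circ(u(g_1),\dots,u(g_N))$.

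Second, the step ``$\varphi^*=u$ by the defining property of $\varphi$'' only says that $u(g)$ and $\varphi^*(g)$ take the same value at every point of $S$; this forces $u(g)=\varphi^*(g)$ when $S$ is reduced, but not in general. For non-reduced $S$ your argument does not yet yield continuity of $u$. The paper's route sidesteps both issues cleanly: one only needs that $\xi^{-1}(\km)$ is the maximal ideal of some point of $S'$, which is immediate from Theorem~\ref{characters}, and then \cite[Theorems~2 and~3]{Forster} finish the proof without ever constructing $\varphi$ or invoking any reducedness hypothesis.
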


\begin{proof}
Let $S$ and $S'$ be two Stein spaces, and let $\xi:\cO(S')\to\cO(S)$ be a $\C$\nobreakdash-algebra morphism. 
Fix a finitely generated maximal ideal  $\km\subset \cO(S)$.
There exists $s\in S$ such that $\km=\{f\in\cO(S)\mid f(s)=0\}$ (see \eg  \cite[V, \S 7.1,  statement above Theorem 1]{GRStein}). Evaluation at $s$ therefore induces an isomorphism $\cO(S)/\km\isoto\C$. 
We let $\chi:\cO(S)\to\C$ be the induced map.

Apply Theorem \ref{characters} to the $\C$-algebra morphism $\chi\circ\xi:\cO(S')\to \C$.  We deduce the existence of $s'\in S'$ such that $\chi\circ\xi(f)=f(s')$ for all $f\in\cO(S')$.  It then follows that $\xi^{-1}(\km)=\{f\in\cO(S')\mid f(s')=0\}$. This maximal ideal is closed (by continuity of the evaluation map $f\mapsto f(s')$), and hence finitely generated by \cite[Theorem~2]{Forster}. 

Since $\km$ was arbitrary, the continuity of $\xi$ is now an application of the criterion given in \cite[Theorem 3]{Forster}.
\end{proof}

\begin{thm}
\label{th}
The contravariant functor 
\begin{equation}
\label{antieq}
 \left\{  \begin{array}{l}
    \textrm{\hspace{2.2em}Stein spaces}\\
\textrm{and holomorphic maps}
  \end{array}\right\}\to
 \left\{  \begin{array}{l}
    \textrm{\hspace{2.5em}Stein algebras}\\
\textrm{and $\C$-algebra morphisms}
  \end{array}\right\}
\end{equation}
given by $S\mapsto \cO(S)$  is an anti-equivalence of categories.
\end{thm}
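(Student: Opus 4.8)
The plan is to combine Forster's anti-equivalence theorem with the automatic continuity statement of Theorem~\ref{thcont}, at which point the proof is essentially a one-line deduction. Forster's theorem \cite[Satz 1]{Forster} asserts precisely that the contravariant functor $S\mapsto\cO(S)$ induces an anti-equivalence between the category of Stein spaces and holomorphic maps and the category of Stein algebras and \emph{continuous} $\C$-algebra morphisms. In particular, this functor is essentially surjective onto Stein algebras (which is anyway built into the very definition of a Stein algebra), and for any two Stein spaces $S$ and $S'$ it induces a bijection between the set of holomorphic maps $S\to S'$ and the set of continuous $\C$-algebra morphisms $\cO(S')\to\cO(S)$.

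To upgrade this to the statement of Theorem~\ref{th}, I would invoke Theorem~\ref{thcont}: every $\C$-algebra morphism $\cO(S')\to\cO(S)$ between Stein algebras is automatically continuous, so the set of continuous $\C$-algebra morphisms $\cO(S')\to\cO(S)$ coincides with the set of \emph{all} $\C$-algebra morphisms $\cO(S')\to\cO(S)$. Consequently the functor $S\mapsto\cO(S)$ remains fully faithful, and is still essentially surjective, when regarded as a functor into the category of Stein algebras and arbitrary $\C$-algebra morphisms; that is, it is an anti-equivalence of categories. The only routine point is that this enlargement of the morphism sets is compatible with the functor structure, which is immediate since on objects the functor is literally $S\mapsto\cO(S)$ and on morphisms it sends a holomorphic map to precomposition on holomorphic functions, independently of which ambient category of algebra morphisms one works in.

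There is essentially no obstacle remaining at this stage: the entire difficulty has been pushed into Theorem~\ref{thcont}, whose proof rests on Theorem~\ref{prop} (the existence, for any Stein space $S$, of a holomorphic map $S\to\C^2$ with finite-dimensional fibers) together with Forster's finite-dimensional automatic continuity theorem \cite[Theorem 5]{ForsterUniqueness} and his characterization of evaluation characters \cite[Satz 1]{Forster}. So I would present this final proof simply as the sentence "by Forster's theorem \cite[Satz 1]{Forster} the functor is an anti-equivalence onto Stein algebras with continuous morphisms, and by Theorem~\ref{thcont} every $\C$-algebra morphism between Stein algebras is continuous."
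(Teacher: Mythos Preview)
Your proposal is correct and matches the paper's own proof essentially verbatim: the paper also simply observes that by Theorem~\ref{thcont} every $\C$-algebra morphism of Stein algebras is continuous, whence the statement is equivalent to Forster's anti-equivalence \cite[Satz~1]{Forster}.
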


\begin{proof}
Since $\C$-algebra morphisms of Stein algebras are automatically continuous by Theorem \ref{thcont},  the theorem is equivalent to \cite[Satz 1]{Forster}.
\end{proof}

We finally record the following consequence of Theorem \ref{th} for use in \cite{Steinsurface}. If~$S$ is a Stein space, we let $\lambda_S:S\to \Spec(\cO(S))$ be the unique morphism of locally ringed spaces such that $\lambda_S^*:\cO(S)\to\cO(S)$ is the identity (see \cite[Lemma 01I1]{SP}).

\begin{prop}
\label{propBing}
Let $X$ be a complex space and let $S$ be a Stein space.  The map
\begin{equation}
\label{bij}
 \left\{  \begin{array}{l}
    \textrm{holomorphic maps}\\
\textrm{\hspace{2.4em}$X\to S$}
  \end{array}\right\}\to
 \left\{  \begin{array}{l}
    \textrm{morphisms of $\C$-locally ringed spaces}\\
\textrm{\hspace{3.9em}$X\to \Spec(\cO(S))$}
  \end{array}\right\}
\end{equation}
given by $f\mapsto \lambda_S\circ f$ is a bijection.
\end{prop}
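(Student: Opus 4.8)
The plan is to construct the inverse map explicitly and then check the two compositions are identities. Given a morphism of $\C$-locally ringed spaces $g\colon X\to\Spec(\cO(S))$, I want to produce a holomorphic map $X\to S$. The key point is that $\Spec(\cO(S))$ receives a canonical map from $S$, namely $\lambda_S$, and one reconstructs $f$ from $g$ by exploiting the fact that, locally on $X$, the map $g$ factors through a basic open (hence affine) subscheme $D(h)\subset\Spec(\cO(S))$, on which one can write down coordinates. More precisely, the pullback $g^*\colon \cO(S)=\Gamma(\Spec(\cO(S)),\cO)\to\cO(X)$ is a $\C$-algebra morphism. The heart of the argument is the following: a $\C$-algebra morphism $\cO(S)\to\cO(X)$ determines a unique holomorphic map $X\to S$ whenever $S$ is Stein. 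I would prove this by reduction to the case where $S$ is a closed subspace of some $\C^N$ (true when $S$ has finite embedding dimension), treating the general case by a limit argument over an exhaustion, and for the embedded case one simply pulls back the coordinate functions $z_1,\dots,z_N\in\cO(\C^N)|_S$ to obtain holomorphic functions on $X$ whose joint image lies in $S$ (because the defining equations of $S$ are algebraic relations preserved by any $\C$-algebra morphism).

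Concretely, here are the steps in order. First, I would recall that $\lambda_S$ is a morphism of $\C$-locally ringed spaces and that the map in (\ref{bij}) is well-defined, so it remains to produce an inverse. Second, I would reduce the construction of the inverse to the following claim: for any complex space $X$ and any Stein space $S$, the natural map
\[
\left\{\textrm{holomorphic maps }X\to S\right\}\to\Hom_{\C\textrm{-alg}}(\cO(S),\cO(X)),\qquad f\mapsto f^*,
\]
is a bijection; indeed, composing a $\C$-locally ringed space morphism $X\to\Spec(\cO(S))$ with the global-sections functor gives a $\C$-algebra morphism $\cO(S)\to\cO(X)$, and conversely, by the universal property of $\Spec$ (see \cite[Lemma 01I1]{SP}), a $\C$-algebra morphism $\cO(S)\to\cO(X)$ together with the structure morphism $\lambda_X\colon X\to\Spec(\cO(X))$ yields a unique $\C$-locally ringed space morphism $X\to\Spec(\cO(S))$, and these operations are mutually inverse. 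So (\ref{bij}) is a bijection if and only if the displayed map is. Third, I would prove the displayed bijection. Injectivity is immediate because holomorphic functions separate points and tangent vectors on $S$ locally (realize $S$ locally as an embedded model space and use the coordinate functions). For surjectivity, given $\varphi\colon\cO(S)\to\cO(X)$, I exhaust $S$ by an increasing sequence of finite-embedding-dimension Stein open subsets is not quite available, so instead I argue pointwise: for each $x\in X$ the composite $\cO(S)\xrightarrow{\varphi}\cO(X)\xrightarrow{\mathrm{ev}_x}\C$ is a $\C$-algebra character of $\cO(S)$, hence by Theorem \ref{characters} equals evaluation at a unique point $f(x)\in S$; this defines a set map $f\colon X\to S$, and one checks it is holomorphic and satisfies $f^*=\varphi$ by working locally, embedding a neighborhood of $f(x)$ in $S$ as a closed subspace of some polydisc and verifying that the pullbacks of the coordinates under $\varphi$ are holomorphic on $X$ (they lie in the image of $\varphi$, hence in $\cO(X)$) and restrict correctly.

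The main obstacle I anticipate is surjectivity of the displayed map, and specifically the verification that the pointwise-defined $f\colon X\to S$ is holomorphic: this requires passing from the abstract $\C$-algebra data to local coordinates on $S$, and one must be careful that a neighborhood basis of $f(x)$ in $S$ is given by Stein subsets that embed in affine space, matching the subscheme-theoretic picture on $\Spec(\cO(S))$. I expect that, after localizing on $\Spec(\cO(S))$ at the prime corresponding to $f(x)$ and using that $S$ is locally of finite embedding dimension, the coordinate functions can be taken in the image of $\varphi$, so that holomorphy of $f$ near $x$ follows from holomorphy of finitely many functions in $\cO(X)$ together with the fact that the defining relations of the local model for $S$ are preserved by the ring map. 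Everything else — well-definedness, the identification with the $\Spec$ universal property, and injectivity — is formal.
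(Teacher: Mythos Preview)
Your reduction in the ``Second'' step is exactly what the paper does: by the universal property of $\Spec$ \cite[Lemma 01I1]{SP}, morphisms of $\C$-locally ringed spaces $X\to\Spec(\cO(S))$ are in bijection with $\C$-algebra morphisms $\cO(S)\to\cO(X)$, so everything comes down to the bijection
\[
\{\text{holomorphic maps }X\to S\}\;\longleftrightarrow\;\Hom_{\C\text{-alg}}(\cO(S),\cO(X)).
\]
Where you diverge from the paper is in how you establish this last bijection. The paper observes that the statement of the proposition is local on $X$ (both holomorphic maps and morphisms of locally ringed spaces glue), reduces to $X$ Stein, and then the displayed bijection is \emph{precisely} the full faithfulness in Theorem~\ref{th}. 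No further argument is needed.

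You instead attempt to prove the displayed bijection directly for arbitrary $X$: define $f$ pointwise using Theorem~\ref{characters}, then verify holomorphy via local coordinates. This is essentially a re-derivation of the full faithfulness of Theorem~\ref{th}, and it can be made to work, but it is considerably longer and the ``main obstacle'' you flag is real. Two points deserve care beyond what you sketch. First, you need continuity of the pointwise map $f$ before you can localize near $f(x)$; this follows because $g\circ f=\varphi(g)$ is holomorphic for every $g\in\cO(S)$ and global holomorphic functions on a Stein space generate its topology, but you should say so. Second, and more seriously, for nonreduced $S$ the set map $f$ does not determine the morphism; you must show that the holomorphic map $W\to\C^N$ given by the $\varphi(g_i)$ factors \emph{as complex spaces} through the local model $U\hookrightarrow V$, i.e.\ that the pullback of the ideal sheaf of $U$ in $V$ vanishes on $W$. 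The generators of this ideal are local functions on $V$, not elements of $\cO(S)$, so they are not directly in the domain of $\varphi$, and your phrase ``the defining relations are preserved by the ring map'' hides a genuine argument. These issues are all surmountable, but the paper's route---localize on $X$, then cite Theorem~\ref{th}---avoids them entirely.
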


\begin{proof}
As the statement is local on $X$, we may assume that $X$ is Stein. In this case, the proposition follows from Theorem \ref{th} since the global sections functor induces a bijection between the set of morphisms of $\C$-locally ringed spaces $X\to\Spec(\cO(S))$ and the set of $\C$-algebra morphisms $\cO(S)\to\cO(X)$ (see \cite[Lemma 01I1]{SP}). 
\end{proof}

\bibliographystyle{myamsalpha}
\bibliography{Steinalgebras}

\end{document}